\numberwithin{equation}{section}
\newtheorem{thm}{Theorem}[section]
\newtheorem{cor}[thm]{Corollary}
\newtheorem{lem}[thm]{Lemma}
\theoremstyle{definition}
\newtheorem{defn}[thm]{Definition}
\newtheorem{rem}[thm]{Remark}
\DeclareMathOperator{\Mon}{Mon}
\DeclareMathOperator{\lex}{lex}
\DeclareMathOperator{\supp}{supp}
\DeclareMathOperator{\K.dim}{K.dim}
\begin{document}
\title{\textbf{Hilbert functions of monomial ideals containing a regular sequence}}
\author{Abed Abedelfatah}
\address{Institute of Mathematics, Hebrew University of Jerusalem, Jerusalem 91904, Israel}
\email{abed@math.haifa.ac.il}
\keywords{Hilbert function, Eisenbud-Green-Harris Conjecture, Regular sequence, $h$-vector, face vector, Cohen Macaulay simplicial complex, flag simplicial complex.}
\begin{abstract}
Let $M$ be an ideal in $K[x_1,\dots,x_n]$ ($K$ is a field) generated by products of linear forms and containing a homogeneous regular sequence of some length. We prove that ideals containing $M$ satisfy the Eisenbud-Green-Harris conjecture and moreover prove that the Cohen-Macaulay property is preserved. We conclude that monomial ideals satisfy this conjecture. We obtain that $h$-vector of Cohen-Macaulay simplicial complex $\Delta$ is the $h$-vector of Cohen-Macaulay $(a_1-1,\dots,a_t-1)$-balanced simplicial complex where $t$ is the height of the Stanley-Reisner ideal of $\Delta$ and $(a_1,\dots,a_t)$ is the type of some regular sequence contained in this ideal.
\end{abstract}
\maketitle

\section{Introduction}
The Eisenbud-Green-Harris (EGH), in the general form, states that for every graded ideal $I$ of height $t$ in $K[x_1,\dots,x_n]$, where $K$ is a field, containing a regular sequence $f_1,\dots,f_t$ of degrees $a_1\leq \cdots \leq a_t$, there exists a graded ideal $J$ containing $x_1^{a_1},\dots,x_t^{a_t}$ with the same Hilbert function. In this article, we prove the conjecture when $(f_1,\dots,f_t)\subseteq M\subseteq I$, where $M$ is a totally reducible ideal (see definition \ref{1}) and moreover prove that if the original ideal $I$ is Cohen-Macaulay then $J$ can be taken as a Cohen-Macaulay ideal.
In the case that each polynomial in the regular sequence is a product of linear forms, we can take $M$ to be the complete intersection ideal $\langle f_1,\dots,f_t\rangle$. So this extend the result in \cite{abed}.

We conclude that the EGH conjecture is true when $I$ is a monomial ideal. A related result was proved in \cite{CCV} when $I$ is generated by monomials of degree 2 and in \cite{J.M} when $f_1,\dots,f_t$ is a monomial regular sequence.

In \cite{CCV}, Caviglia, Constantinescu and Varbaro proved that $h$-vectors of Cohen-Macaulay flag simplicial complexes are $h$-vectors of Cohen-Macaulay balanced simplicial complexes. We generalize by proving that if $\Delta$ is a Cohen-Macaulay simplicial complex such that $I_{\Delta}$ of height $t$ and containing a regular sequence $f_1,\dots,f_t$ with $\deg(f_i)=a_i$ for all $1\leq i\leq t$, then there exists a Cohen-Macaulay $(a_1-1,\dots,a_t-1)$-balanced simplicial complex $\Gamma$ with the same $h$-vector.

\section{Preliminaries and notations}
A proper ideal $I$ in $S=K[x_1,\dots,x_n]$ is called \emph{graded} or \emph{homogeneous} if it has a system of homogeneous generators. Let $R=S/I$, where $I$ is a homogeneous ideal. The sequence $H(R)=\{H(R,n)\}_{n\geq0}$, where $H(R,n):=\dim_{K}R_n=\dim_{K}S_n/I_n$ is called the Hilbert function of $I$ (or $R$), and $H_{R}(t)=\sum_{0\leq n\in\mathbb{Z}}H(R,n)t^n$ is the Hilbert series of $R$.
For simplicity, we denote the dimension of a $K$-vector space $V$ by $|V|$ instead of $\dim_{K}V$. For a $K$-vector space $V\subseteq S_d$, where $d\geq 0$, we denote by $S_1V$ the $K$-vector space spanned by $\{x_iv:~1\leq i\leq n~\wedge~v\in V\}$. Throughout this paper $\textbf{A}=(a_1,\dots,a_n)\in \mathbb{Z}^n$, where $a_1\leq\cdots\leq a_n$. For a subset $A$ of $S$, we denote by $\Mon(A)$ the set of all monomials in $A$. Let $f_1,\dots,f_t$ be a regular sequence in $S$ with $\deg(f_i)=a_i$ for all $i$. We say that $(a_1,\dots,a_t)$ is the \emph{type} of the homogeneous polynomials $f_1,\dots,f_t$ if $\deg(f_i)=a_i$ for all $1\leq i\leq t$.

We define the \emph{lex order} on $\Mon(S)$ by setting $\textbf{x}^b=x_1^{b_1}\cdots x_n^{b_n}<_{\lex}x_1^{c_1}\cdots x_n^{c_n}=\textbf{x}^c$ if either $\deg(\textbf{x}^b)<\deg(\textbf{x}^c)$ or $\deg(\textbf{x}^b)=\deg(\textbf{x}^c)$ and $b_i<c_i$ for the first index $i$ such that $b_i\neq c_i$. We recall the definitions of lex ideal and lex-plus-powers ideal.
A graded ideal is called \emph{monomial} if it has a system of monomial generators. A monomial ideal $I\subseteq S$ is called \emph{lex}, if whenever $I\ni z<_{\lex}w$, where $w,z$ are monomials of the same degree, then $w\in I$.
A monomial ideal $I$ is \emph{$\textbf{A}$-lex-plus-powers} if there exists a lex ideal $L$ such that $I=\langle x_1^{a_1},\dots,x_n^{a_n}\rangle+L$.

A \emph{simplicial complex} $\Delta$ on the set $A=\{1,\dots,n\}$ is a collection of subsets $A$ such that
\begin{itemize}
  \item $\{i\}\in\Delta$ for every $1\leq i\leq n$.
  \item If $F\in\Delta$ and $G\subseteq F$, then $G\in\Delta$.
\end{itemize}
Each element $F\in\Delta$ is called a \emph{face} of $\Delta$. A maximal face of $\Delta$ with respect to inclusion is called a facet and we will denote by $\mathcal{F}(\Delta)$ the set of facets of $\Delta$. The dimension of the face $F$ is $|F|-1$ and the \emph{dimension of $\Delta$} is $\max\{\dim F~:~F\in\Delta\}$. We denote by $f_i=f_i(\Delta)$ the number of faces of $\Delta$ of dimension $i$. The sequence $f(\Delta)=(f_{-1},f_0,\dots,f_{d-1})$, where $d-1$ is the dimension of $\Delta$, is called the \emph{$f$-vector} of $\Delta$. The Stanley-Reisner ideal of $\Delta$ is the ideal $I_{\Delta}$ of $S$ generated by $x_F=x_{i_1}\cdots x_{i_j}$ where $1\leq i_1<\cdots< i_j\leq n$ and  $F=\{i_1,\dots,i_j\}\notin\Delta$. The Stanley-Reisner ring is defined by $K[\Delta]=S/I_{\Delta}$.  If $d=\K.dim(K[\Delta])$, then $H_{K[\Delta]}(t)=\frac{h_0+h_1t+\dots+h_dt^d}{(1-t)^d}$, where $h_i\in \mathbb{Z}$ for all $i$. The sequence  $h(\Delta)=(h_0,\dots,h_d)$ is called the \emph{$h$-vector of $\Delta$}. The $f$-vector and the $h$-vector of a $(d-1)$-dimensional simplicial complex $\Delta$ are related by $$\sum_{i=0}^{d}h_it^{d-i}=\sum_{i=0}^{d}f_{i-1}(t-1)^{d-i}.$$
A simplicial complex is called \emph{flag} if all its minimal nonfaces have cardinality two, i.e. the Stanley-Reisner ideal is generated by square-free monomials of degree two. A simplicial complex $\Delta$ is called Cohen-Macaulay (CM) over a field $K$ if $K[\Delta]$ is Cohen-Macaulay.\\
Let $b_1,\dots,b_r$ be a positive integers. A simplicial complex $\Delta$ is called \emph{$(b_1,\dots,b_r)$-balanced} if $1+\dim \Delta=\sum_{k=1}^rb_k$ and the vertex set of $\Delta$ can be partitioned into $r$ sets $V_1,\dots,V_r$ such that $|F\cap V_k|\leq b_k$ for every face $F$ of $\Delta$ and all $1\leq k\leq r$. If $b_i=1$ for all $i$, then $\Delta$ is called \emph{balanced complex}.

\section{The main results}

\begin{defn}\label{1}
A graded ideal in $S$ is called \emph{totally reducible} if it is generated by products of linear forms.
\end{defn}
\begin{lem}\label{20}
Let $K$ be an infinite field and $I$ be an ideal of $S=K[x_1,\dots,x_n]$. Assume that $I$ is a totally reducible ideal that contains a regular sequence $g_1,\dots,g_t$ with $\deg(g_i)=a_i$. If $f_1=\ell_1\cdots\ell_{a_1}$ is some polynomial in $I$ of degree $a_1$, where $\ell_i\in S_1$ for all $i$, then $I$ contains a regular sequence $f_1,f_2,\dots,f_t$ such that $\deg(f_i)=a_i$ for all $1\leq i\leq t$.
\end{lem}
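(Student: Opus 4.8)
The plan is to reduce the statement to a statement about heights and then build $f_2,\dots,f_t$ one at a time by prime avoidance, using the infiniteness of $K$ to locate an element of the \emph{correct degree} outside finitely many primes. First I would record the reduction. Because $S$ is Cohen-Macaulay, a homogeneous sequence $f_1,\dots,f_t$ is regular if and only if $\operatorname{ht}(f_1,\dots,f_i)=i$ for each $i$; equivalently, once $f_1,\dots,f_{i-1}$ is a regular sequence it suffices to choose $f_i\in I_{a_i}$ lying outside every associated prime of $S/(f_1,\dots,f_{i-1})$, since such an $f_i$ is a nonzerodivisor modulo $(f_1,\dots,f_{i-1})$. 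The base case is immediate: $f_1\neq 0$ is a homogeneous nonzerodivisor in the domain $S$, so $f_1$ alone is a regular sequence of length one with $\deg f_1=a_1$. Thus the whole lemma follows by induction once the selection step is carried out.

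For the inductive step, fix $i\le t$ and suppose $f_1,\dots,f_{i-1}\in I$ is a regular sequence with $\deg f_k=a_k$. Let $P_1,\dots,P_s$ be the finitely many associated primes of $S/(f_1,\dots,f_{i-1})$; since $f_1,\dots,f_{i-1}$ is a regular sequence in the Cohen-Macaulay ring $S$, this quotient is Cohen-Macaulay, hence unmixed, so each $P_j$ satisfies $\operatorname{ht}P_j=i-1$. I want $f_i\in I_{a_i}$ with $f_i\notin\bigcup_j P_j$. Because $K$ is infinite, the vector space $I_{a_i}$ is not a union of the finitely many proper subspaces $I_{a_i}\cap P_j$, so it is enough to prove that $I_{a_i}\not\subseteq P_j$ for each single $j$.

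The crux is therefore the following: for a homogeneous prime $P$ with $\operatorname{ht}P=i-1$, produce an element of $I$ of degree exactly $a_i$ outside $P$. Here I would invoke the ambient regular sequence. The initial segment $g_1,\dots,g_i$ is itself a regular sequence, so $\operatorname{ht}(g_1,\dots,g_i)=i>i-1=\operatorname{ht}P$ forces $(g_1,\dots,g_i)\not\subseteq P$; hence some $g_k\notin P$ with $k\le i$, and then $\deg g_k=a_k\le a_i$. Since $\operatorname{ht}P=i-1\le t-1<n$ we have $P\neq\mathfrak m$, so $S_1\not\subseteq P$ and I may pick a linear form $\mu\notin P$. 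The element $g_k\,\mu^{\,a_i-a_k}$ then lies in $I$ (as $I$ is an ideal and $g_k\in I$), has degree $a_i$, and avoids $P$ because $P$ is prime and neither factor lies in $P$. This gives $I_{a_i}\not\subseteq P$, completing the inductive step and hence the proof.

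The main obstacle is precisely the degree bookkeeping in this last paragraph: plain homogeneous prime avoidance would only yield some element of $I$ outside $P$ of uncontrolled degree, whereas the conclusion demands degree exactly $a_i$. The device that resolves it is that initial segments of the given regular sequence $g_1,\dots,g_t$ supply, for every prime of height below $i$, a witness $g_k$ of degree at most $a_i$, after which multiplication by a suitable power of a linear form raises the degree to $a_i$ without re-entering $P$. I would note that, beyond guaranteeing $f_1\in I$ with $\deg f_1=a_1$, neither the total reducibility of $I$ nor the specific factored form of $f_1$ is actually used in this argument; they matter for the surrounding results rather than for the completion statement proved here.
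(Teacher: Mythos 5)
Your proof is correct, and it follows the paper's skeleton — induction on the length of the sequence, then infinite-field prime avoidance over the primes of $(f_1,\dots,f_j)$, with unmixedness of that ideal (you phrase it as Cohen--Macaulayness of the quotient, the paper as unmixedness of an ideal generated by $\mathrm{ht}$-many elements in a CM ring; these are the same point) guaranteeing that avoiding those primes yields a nonzerodivisor. Where you genuinely diverge is at the crux, the non-containment $I_{a_{j+1}}\nsubseteq P$ for each prime $P$ in the avoidance set: the paper imports this from [\citen{sm}, Lemma 4.1], which produces inside $I$ a regular sequence of length $j+1$ concentrated entirely in degree $a_{j+1}$, and such a sequence cannot fit inside a prime of height $j$. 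You instead manufacture a witness by hand: since $\mathrm{ht}(g_1,\dots,g_{j+1})=j+1>\mathrm{ht}(P)$, some $g_k\notin P$ with $k\leq j+1$, hence $\deg(g_k)=a_k\leq a_{j+1}$, and $g_k\mu^{a_{j+1}-a_k}\in I_{a_{j+1}}\setminus P$ for any linear form $\mu\notin P$ (which exists because $\mathrm{ht}(P)\leq t-1<n$ forces $P\neq\mathfrak{m}$, so $S_1\nsubseteq P$). This degree-padding trick makes the proof self-contained where the paper's is shorter but dependent on an external lemma (whose full strength — an equigenerated regular sequence of length $j+1$, not just a single element — is more than the avoidance step actually requires). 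Your closing observation is also accurate and worth having on record: the argument never uses total reducibility of $I$ or the factored form of $f_1$, only that $f_1\in I$ is nonzero of degree $a_1$ and that $I$ contains a regular sequence of type $(a_1,\dots,a_t)$; those hypotheses earn their keep in Theorem \ref{21}, where $f_1$ must split into linear forms to run the induction and the images $M_i$ must remain totally reducible, not in this completion lemma.
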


\begin{proof}
Suppose that we found $f_j$, where $1\leq j<t$. Let $P_1,\dots,P_r$ be the minimal prime ideals over $F=\langle f_1,\dots,f_j\rangle$. By [\citen{sm}, Lemma 4.1], $I_{a_{j+1}}$ contains a regular sequence $h_1,\dots,h_{j+1}$ such that $\deg(h_i)=a_{j+1}$ for all $1\leq i\leq j+1$. So $I_{a_{j+1}}\nsubseteq P_i$ for all $1\leq i\leq r$. Since the field $K$ is infinite, $I_{a_{j+1}}\nsubseteq P_1\cup\cdots\cup P_r$. It follows that there is $f_{j+1}\in I_{a_{j+1}}$ that avoids every $P_i$. Note that $S$ is CM and $F$ is generated by $\mathrm{ht}(F)$ elements. So $F$ is unmixed and it follows that $f_{j+1}$ is a non-zero-divisor in $S/F$.
\end{proof}

\begin{lem}\label{23}
Let $I$ be a graded ideal in $S$ containing a regular sequence $f_1,\dots,f_t$ with $\deg(f_i)=a_i$. Assume that for every $d\geq 0$, there is an ideal $L_d$ containing $\langle x_1^{a_1},\dots,x_t^{a_t}\rangle_d$ and $\langle x_1^{a_1},\dots,x_t^{a_t}\rangle_{d+1}$ such that $H(S/L_d,d)=H(S/I,d)$ and $H(S/L_d,d+1)\geq H(S/I,d+1)$. Then $I$ has the same Hilbert function as an ideal containing $x_1^{a_1},\dots,x_t^{a_t}$.
\end{lem}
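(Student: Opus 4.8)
The plan is to construct a single lex-plus-powers ideal $J$ with respect to the powers $x_1^{a_1},\dots,x_t^{a_t}$ (equivalently, taking $a_{t+1}=\dots=a_n=\infty$) whose Hilbert function coincides with that of $I$. Write $P=\langle x_1^{a_1},\dots,x_t^{a_t}\rangle$. For each $d\geq 0$ I would let $J_d\subseteq S_d$ be the $K$-span of $\Mon(P_d)$ together with the $|I_d|-|P_d|$ largest monomials (in the lex order) of $S_d\setminus \Mon(P_d)$; this is well defined because the witness $L_d$ shows $|P_d|\leq |(L_d)_d|=|I_d|\leq |S_d|$. By construction $P_d\subseteq J_d$ and $|J_d|=|I_d|$, so once I prove that $J=\bigoplus_d J_d$ is an ideal the lemma follows: $J$ contains $x_1^{a_1},\dots,x_t^{a_t}$ and $H(S/J,d)=|S_d|-|I_d|=H(S/I,d)$ for all $d$.

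Thus everything reduces to showing $S_1J_d\subseteq J_{d+1}$ for every $d$. Here I would invoke the theorem of Clements and Lindstr\"om, which supplies the two facts about these lex-plus-powers monomial spaces that drive the argument: first, among all $K$-subspaces $W\subseteq S_d$ with $P_d\subseteq W$ and $|W|=|I_d|$, the space $J_d$ minimizes $|S_1W|$; and second, $S_1J_d$ is again a space of the same type, namely the span of $\Mon(P_{d+1})$ together with the $|S_1J_d|-|P_{d+1}|$ largest lex monomials of $S_{d+1}\setminus\Mon(P_{d+1})$. Because such spaces are nested by dimension, the inclusion $S_1J_d\subseteq J_{d+1}$ is equivalent to the purely numerical inequality $|S_1J_d|\leq |J_{d+1}|=|I_{d+1}|$.

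The remaining inequality is exactly where the hypothesis enters. Applying the first Clements--Lindstr\"om fact to the space $(L_d)_d$, which contains $P_d$ and has dimension $|(L_d)_d|=|I_d|=|J_d|$, gives $|S_1J_d|\leq |S_1(L_d)_d|$. Since $L_d$ is an ideal we have $S_1(L_d)_d\subseteq (L_d)_{d+1}$, and since $H(S/L_d,d+1)\geq H(S/I,d+1)$ we have $|(L_d)_{d+1}|\leq |I_{d+1}|$. Chaining these,
\[
|S_1J_d|\leq |S_1(L_d)_d|\leq |(L_d)_{d+1}|\leq |I_{d+1}|,
\]
which is the desired inequality. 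Hence $S_1J_d\subseteq J_{d+1}$ for all $d$, so $J$ is an ideal, and the construction finishes the proof.

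I expect the main obstacle to be purely the invocation of the Clements--Lindstr\"om machinery: one must check that it applies when only the first $t$ of the powers are finite (the remaining variables being unconstrained), and that $S_1$ of a lex-plus-powers space is again lex-plus-powers, so that the dimension comparison upgrades to an actual inclusion via nesting. Everything else is the short dimension count of the displayed chain, which uses the witnesses $L_d$ only through their two defining Hilbert-function properties.
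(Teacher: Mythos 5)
Your construction is in substance the same as the paper's (the paper lexifies each witness $L_d$ via Clements--Lindstr\"om and sets $L=\oplus_d L_{d,d}$, which is exactly your degreewise lex-plus-powers space $J$), but your verification that $J$ is an ideal has a genuine gap at the edge degrees $d+1=a_i$. Your ``second fact'' is false as stated: $S_1J_d$ need not contain $\Mon(P_{d+1})$. For instance, with $P=\langle x_1^2,x_2^3\rangle\subseteq K[x_1,x_2,x_3]$, $d=2$ and $|I_2|=2$, you get $J_2=\Span\{x_1^2,x_1x_2\}$, and $x_2^3\in P_3$ but $x_2^3\notin S_1J_2$ since $x_2^2\notin J_2$. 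So $S_1J_d$ is not ``of the same type,'' and the nesting step does not follow from $|S_1J_d|\leq|J_{d+1}|$ alone. What nesting actually requires is that the monomials of $S_1J_d$ outside $P$, i.e.\ the image of $J_d$ under multiplication in $R=S/P$, form a lex segment of $R_{d+1}$ of dimension at most $|I_{d+1}|-|P_{d+1}|$; this amounts to the stronger inequality $|S_1J_d+P_{d+1}|\leq|I_{d+1}|$, whereas your displayed chain only yields $|S_1J_d|\leq|I_{d+1}|$, and these differ exactly when $P_{d+1}\nsubseteq S_1J_d$. The telltale sign is that you never use the hypothesis that $L_d$ contains $\langle x_1^{a_1},\dots,x_t^{a_t}\rangle_{d+1}$ (only the degree-$d$ containment appears in your argument), and that hypothesis is precisely what closes the gap: since $P_{d+1}\subseteq (L_d)_{d+1}$ and $L_d$ is an ideal, one has $S_1(L_d)_d+P_{d+1}\subseteq (L_d)_{d+1}$, so running your growth comparison in the quotient ring $R$ (lex segments minimize the dimension of $R_1\cdot(-)$ among subspaces of $R_d$ of fixed dimension) gives $|S_1J_d+P_{d+1}|\leq|(L_d)_{d+1}|\leq|I_{d+1}|$, as required.

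Even after this repair, your route leans on two pieces of Clements--Lindstr\"om machinery (growth minimization for arbitrary subspaces of $R_d$, and stability of lex segments under multiplication by $R_1$). The paper's proof sidesteps both: it applies Clements--Lindstr\"om to the witness ideals themselves, replacing each $L_d$ by a lex-plus-powers ideal with the same Hilbert function in degrees $d$ and $d+1$, so that $S_1L_{d,d}\subseteq L_{d,d+1}$ holds automatically because the lexified witness is an ideal containing $P$; it then needs only the trivial nesting of two lex segments, $|L_{d,d+1}|\leq|I_{d+1}|=|L_{d+1,d+1}|$ forcing $L_{d,d+1}\subseteq L_{d+1,d+1}$. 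That version uses the degree-$(d+1)$ hypothesis exactly where your argument silently needs it, and is the cleanest way to finish your construction.
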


\begin{proof}
By Clements-Lindstr{\"o}m's theorem \cite{clements}, we may assume that $L_{d,d}$ and $L_{d,d+1}$ are lex vector spaces in $S/\langle x_1^{a_1},\dots,x_t^{a_t}\rangle$ for all $d\geq0$, where $L_{d,i}$ is the $i$-th component of $L_d$. Let $L=\oplus_{d\geq0}L_{d,d}$. Since $|L_{d,d+1}|\leq|I_{d+1}|=|L_{d+1,d+1}|$, it follows that $L_{d,d+1}\subseteq L_{d+1,d+1}$, for all $d$. So $S_1L_{d,d}\subseteq L_{d,d+1}\subseteq L_{d+1,d+1}$, for all $d$. Thus, $L$ is an ideal. Clearly, $H(S/I)=H(S/L)$ and $x_1^{a_1},\dots,x_t^{a_t}\in L$.
\end{proof}

\begin{thm}\label{21}
Assume that $M$ is a totally reducible ideal that contains a regular sequence $f_1,\dots,f_t$ of degrees $\deg(f_i)=a_i$. If $I$ is a graded ideal in $S$ containing $M$, then $I$ has the same Hilbert function as an ideal containing $x_1^{a_1},\dots,x_t^{a_t}$.
\end{thm}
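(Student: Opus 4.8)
The plan is to reduce the statement, via Lemma~\ref{23}, to a single per-degree growth inequality, and then to establish that inequality by induction, peeling off one linear factor of $f_1$ at a time. First I would reduce to the case that $K$ is infinite: extending scalars to an infinite overfield $K'\supseteq K$ leaves $H(S/I)$ unchanged, keeps $M$ totally reducible (the prescribed linear factors remain linear over $K'$) and keeps $f_1,\dots,f_t$ a regular sequence by flatness, while conversely the existence over $K$ of an ideal containing $x_1^{a_1},\dots,x_t^{a_t}$ with a prescribed Hilbert function is a field-independent numerical condition (the realizing $\mathbf{A}$-lex-plus-powers ideal is defined over the prime field), so the conclusion descends. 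With $K$ infinite I may invoke Lemma~\ref{20}. Then, writing $P=\langle x_1^{a_1},\dots,x_t^{a_t}\rangle$ and noting that $I$ contains the complete intersection $\langle f_1,\dots,f_t\rangle$, which has the same Hilbert function as $P$, one has $H(S/I,d)\le H(S/P,d)$, so for each $d$ there is an $\mathbf{A}$-lex-plus-powers ideal $L$ with $H(S/L,d)=H(S/I,d)$; by the Clements--Lindström theorem such an $L$ has the slowest growth among ideals containing $P$. Thus, by Lemma~\ref{23}, the entire theorem reduces to
\[
(\star)\qquad H(S/L,d+1)\ \ge\ H(S/I,d+1)\qquad\text{for every }d\ge 0,
\]
that is, to showing that $I$ grows at least as fast as the $\mathbf{A}$-lex-plus-powers ideal of equal codimension in degree $d$.

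I would prove $(\star)$ by induction on $n$ and, for fixed $n$, on $a_1+\cdots+a_t$. Using Lemma~\ref{20} I arrange $f_1=\ell_1\cdots\ell_{a_1}$ with the $\ell_i$ linear, and after a linear change of coordinates (which affects none of the Hilbert functions in sight) take $\ell:=\ell_1=x_1$. The short exact sequence $0\to (S/(I:\ell))(-1)\xrightarrow{\ \ell\ } S/I\to S/(I+(\ell))\to 0$ yields $H(S/I,d+1)=H(S/(I:\ell),d)+H(S/(I+(\ell)),d+1)$. Here $I:\ell$ contains the totally reducible ideal $M+\langle \ell_2\cdots\ell_{a_1}\rangle$, which has height $\ge t$ and, by re-running the prime-avoidance argument of Lemma~\ref{20} with the degree-$(a_1-1)$ factor $\ell_2\cdots\ell_{a_1}$, carries a regular sequence of type $(a_1-1,a_2,\dots,a_t)$; and modulo $x_1$ the ideal $(I+(\ell))/(\ell)$ lives in $K[x_2,\dots,x_n]$, contains the (totally reducible, height $\ge t-1$) image of $M$, and carries a regular sequence of type $(a_2,\dots,a_t)$. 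Both sub-problems have strictly smaller induction parameter, so $(\star)$ holds for them by hypothesis.

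On the comparison side the matching is clean, since $P:x_1=\langle x_1^{a_1-1},x_2^{a_2},\dots,x_t^{a_t}\rangle$ and $P+(x_1)\equiv\langle x_2^{a_2},\dots,x_t^{a_t}\rangle \pmod{x_1}$, so the colon term corresponds exactly to the type $(a_1-1,a_2,\dots,a_t)$ and the quotient term to the type $(a_2,\dots,a_t)$ in one fewer variable. The intended conclusion is that the two inductive inequalities add up, through the displayed additivity of $H(S/I,\,\cdot\,)$, to $(\star)$ for $I$.

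I expect this last addition to be the main obstacle. While splitting $H(S/I,d+1)$ through $\ell$ is formal, I must show that the growth of the $\mathbf{A}$-lex-plus-powers ideal itself splits under multiplication by $x_1$ compatibly with the colon/quotient decomposition of the types, so that the minimal growth for type $\mathbf{A}$ equals (or is bounded by) the sum of the minimal growths for the two smaller types. This is a purely combinatorial statement about lex segments in $S/P$, and its delicate point is the bookkeeping of codimensions: the degree-$d$ codimensions of $I:\ell$ and of the quotient need not individually equal those of the corresponding lex-plus-powers pieces, so I would need the monotonicity of Clements--Lindström growth in the prescribed codimension to absorb the discrepancy and still close the induction.
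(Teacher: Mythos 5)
Your framing of the problem is right, and the reduction half of your plan (pass to an infinite field, invoke Lemma~\ref{20} to factor $f_1$ into linear forms, and use Lemma~\ref{23} together with Clements--Lindstr\"om to reduce everything to a two-degree growth statement) coincides with the paper. But the inductive half has a genuine gap, precisely at the step you flag, and the repair you propose does not work. Write $B_e=H(S/(I:\ell),e)$ and $C_e=H(S/(I+\langle\ell\rangle),e)$, so that $H(S/I,d)=B_{d-1}+C_d$ and $H(S/I,d+1)=B_d+C_{d+1}$. Your two inductive hypotheses give $B_d\leq \mathcal{G}'(B_{d-1})$ (maximal Clements--Lindstr\"om growth for type $(a_1-1,a_2,\dots,a_t)$, taken over the degree step $d-1\to d$) and $C_{d+1}\leq \mathcal{G}''(C_d)$ (type $(a_2,\dots,a_t)$ in $n-1$ variables, step $d\to d+1$), and closing the induction requires $\mathcal{G}'(b)+\mathcal{G}''(c)\leq \mathcal{G}_{\mathbf{A}}(b+c)$ for the split $b=B_{d-1}$, $c=C_d$. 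This superadditivity is false, essentially because the colon bound is taken one degree lower, where maximal growth is more generous, and bounding $B$ and $C$ separately destroys the correlation between them. Concretely: take $n=3$, $t=1$, $\mathbf{A}=(2)$, $M=I=\langle x_1^2,x_2,x_3\rangle$ (totally reducible, containing the regular sequence $f_1=x_1^2=\ell_1\ell_2$ with $\ell_1=\ell_2=x_1$), and $d=1$. Then $b=B_0=1$, $c=C_1=0$, and $\mathcal{G}'(1)=2$ (the type-$(1)$ lex-plus-powers ideal $\langle x_1\rangle$ leaves $x_2,x_3$ in degree one), $\mathcal{G}''(0)=0$, while $\mathcal{G}_{\mathbf{A}}(1)=1$ (the lex-plus-powers ideal $\langle x_1^2\rangle+\langle x_1,x_2\rangle$ leaves only $x_3^2$). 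So the sum of your inductive bounds is $2$, strictly above the target bound $1$; the true values ($B_1=0$, $H(S/I,2)=0$) satisfy $(\star)$, but your argument cannot see this. Monotonicity of the Clements--Lindstr\"om bound in the codimension cannot absorb an overshoot of this kind: the problem is not which split $(b,c)$ to choose, but that even realizable splits violate the additivity you need.

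This failure is exactly why the paper does not peel one linear factor at a time. Instead it peels \emph{all} $r=a_1$ factors of $f_1$ simultaneously, through the chain of colons $(I:\ell_1\cdots\ell_i)$, after imposing the WLOG maximality ordering $|(I:\ell_1\cdots\ell_{i})_{d-i}\cap\langle\ell_{i+1}\rangle_{d-i}|\geq|(I:\ell_1\cdots\ell_{i})_{d-i}\cap\langle\ell_j\rangle_{d-i}|$ on the factors. Two structural advantages result. First, every quotient $J_i$ contains a linear form, so all $r$ subproblems live in $n-1$ variables with the \emph{same} type $(a_2,\dots,a_t)$: the induction is on $n$ alone, and the awkward type $(a_1-1,a_2,\dots,a_t)$ with its shifted degree step never appears. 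Second, the maximality ordering yields the nesting $L_{i,d-i}\subseteq L_{i+1,d-i}$ of the inductively obtained lex-plus-powers ideals, which is what allows the paper to \emph{assemble an explicit ideal} $L$ (stratified by the power of $x_1$, via the sets $K_i$) satisfying $H(S/L,j)=H(S/I,j)$ exactly for $j=d,d+1$ --- no comparison of growth functions, and hence no superadditivity lemma, is ever needed. To rescue your single-factor scheme you would have to prove a constrained version of the addition inequality exploiting the relations between $B_\bullet$ and $C_\bullet$ that an actual ideal imposes, which is essentially as hard as the theorem itself; the all-factors-at-once decomposition with the maximality ordering is the missing idea.
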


\begin{proof}
Since the Hilbert function of a monomial ideal is independent of the filed $K$, by an extension of $K$ we may assume that $K$ is infinite. We prove the theorem by induction on $n$. Let $n>1$. By lemma \ref{20}, we may assume that $f_1=\ell_1\cdots \ell_r$, where $\ell_1,\dots,\ell_r\in S_1$ ($r=a_1)$. Let $d\geq0$. By lemma \ref{23}, we need to find a graded ideal $L$ containing $\langle x_1^{a_1},\dots,x_t^{a_t}\rangle_d$ and $\langle x_1^{a_1},\dots,x_t^{a_t}\rangle_{d+1}$ such that $H(S/L,d)=H(S/I,d)$ and $H(S/L,d+1)\geq H(S/I,d+1)$. Without loss of generality, we may assume that

\begin{center} $|I_d\cap \langle \ell_1\rangle_d|\geq |I_d\cap \langle \ell_i\rangle_d|$ for all $2\leq i\leq r$,\\
$|(I:\ell_1)_{d-1}\cap \langle \ell_2\rangle_{d-1}|\geq |(I:\ell_1)_{d-1}\cap \langle \ell_i\rangle_{d-1}|$ for all $3\leq i\leq r$,\\
$|(I:\ell_1\ell_2)_{d-2}\cap \langle \ell_3\rangle_{d-2}|\geq |(I:\ell_1\ell_2)_{d-2}\cap \langle \ell_i\rangle_{d-2}|$ for all $4\leq i\leq r$,\\
$\vdots$\\
$|(I:\ell_1\cdots \ell_{r-2})_{d-(r-2)}\cap \langle \ell_{r-1}\rangle_{d-(r-2)}|\geq |(I:\ell_1\cdots \ell_{r-2})_{d-(r-2)}\cap \langle \ell_r\rangle_{d-(r-2)}|$.\end{center}

By considering the short exact sequences
\begin{center}
$0\rightarrow S/(I:\ell_1){\longrightarrow}S/I{\longrightarrow}S/I+\langle \ell_1\rangle\rightarrow 0$,\\
$0\rightarrow S/(I:\ell_1\ell_2){\longrightarrow}S/(I:\ell_1){\longrightarrow}S/(I:\ell_1)+\langle \ell_2\rangle\rightarrow 0$,\\
$0\rightarrow S/(I:\ell_1\ell_2\ell_3){\longrightarrow}S/(I:\ell_1\ell_2){\longrightarrow}S/(I:\ell_1\ell_2)+\langle \ell_3\rangle\rightarrow 0$,\\
$\vdots$\\
$0\rightarrow S/(I:\ell_1\cdots \ell_{r-1}){\longrightarrow}S/(I:\ell_1\cdots \ell_{r-2}){\longrightarrow}S/(I:\ell_1\cdots \ell_{r-2})+\langle \ell_{r-1}\rangle\rightarrow 0$.
\end{center}
we see that $H(S/I,t)$ is equal to $$H(S/I+\langle \ell_1\rangle,t)+\sum_{i=1}^{r-2}H(S/(I:\ell_1\cdots \ell_i)+\langle \ell_{i+1}\rangle,t-i)+H(S/(I:\ell_1\cdots \ell_{r-1}),t-(r-1))$$ for all $t\geq0$.\\

Let $J_0=I+\langle \ell_1\rangle$, $J_{r-1}=(I:\ell_1\cdots \ell_{r-1})$, and for $1\leq i\leq r-2$ let $J_i=(I:\ell_1\cdots \ell_i)+\langle \ell_{i+1}\rangle$. Note that for all $0\leq i\leq r-1$ and $H(\frac{S/\langle \ell_{i+1}\rangle}{J_i/\langle \ell_{i+1}\rangle})=H(S/J_i)$. For all $0\leq i\leq r-1$, let $M_i$ be the image of $M$ in $S/\langle \ell_{i+1}\rangle$. It is clear that $M_i\subseteq J_i/\langle \ell_{i+1}\rangle$ is a totally reducible ideal that contains a regular sequence $\overline{f_2},\dots,\overline{f_t}$. For all $0\leq i\leq r-1$, $S/\langle \ell_{i+1}\rangle$ is isomorphic to $\overline{S}=K[x_2,\dots,x_n]$, so by the inductive step there is an ideal in $\overline{S}$ containing $x_2^{a_2},\dots,x_{t}^{a_{t}}$ with the same Hilbert function as $J_{i}$. For all $0\leq i\leq r-1$, let $L_i$ be the lex-plus-powers ideal in $\overline{S}$ containing $x_2^{a_2},\dots,x_{t}^{a_{t}}$ such that $H(\overline{S}/L_i)=H(S/J_i)$.\\\\

\textbf{\emph{Claim:}} $L_{i,d-i}\subseteq L_{i+1,d-i}$ for all $0\leq i\leq r-2$, where $L_{i,j}$ is the $j$-th component of the ideal $L_i$.\\\\
\emph{Proof of the claim:} If $i=0$, then
\begin{align*}
|J_{0,d}|&=|I_d|+|\langle \ell_1\rangle_d|-|I_d\cap \langle \ell_1\rangle_d|\\
&\leq|I_d|+|\langle \ell_1\rangle_d|-|I_d\cap \langle \ell_2\rangle_d|\\
&=|I_d|+|\langle \ell_2\rangle_d|-|I_d\cap \langle \ell_2\rangle_d|\\
&=|I_d+\langle \ell_2\rangle_d|\\
&\leq |J_{1,d}|.
\end{align*}
So $H(\overline{S}/L_0,d)\geq H(\overline{S}/L_1,d)$. Since $L_0$ and $L_1$ are lex-plus-powers ideals, it follows that $L_{0,d}\subseteq L_{1,d}$.
If $1\leq i\leq r-2$, then
\begin{align*}
|J_{i,d-i}|&=|(I:\ell_1\cdots \ell_i)_{d-i}|+|\langle \ell_{i+1}\rangle_{d-i}|-|(I:\ell_1\cdots \ell_i)_{d-i}\cap \langle \ell_{i+1}\rangle_{d-i}|\\
&\leq|(I:\ell_1\cdots \ell_i)_{d-i}|+|\langle \ell_{i+1}\rangle_{d-i}|-|(I:\ell_1\cdots \ell_i)_{d-i}\cap \langle \ell_{i+2}\rangle_{d-i}|\\
&=|(I:\ell_1\cdots \ell_i)_{d-i}|+|\langle \ell_{i+2}\rangle_{d-i}|-|(I:\ell_1\cdots \ell_i)_{d-i}\cap \langle \ell_{i+2}\rangle_{d-i}|\\
&=|(I:\ell_1\cdots \ell_i)_{d-i}+\langle \ell_{i+2}\rangle_{d-i}|\\
&\leq |J_{i+1,d-i}|.
\end{align*}
Similarly, we conclude that $L_{i,d-i}\subseteq L_{i+1,d-i}$, and prove the claim.\\\\
Let $K_{r}=\{zx_1^{r+j}:~z\in\Mon(\overline{S})\wedge j\geq0\}$ and $$K_{i}=\{zx_1^{i}:~z\in \Mon(L_{i})~\wedge~ d-i\leq \deg(z)\leq d-i+1\}$$ for all $0\leq i\leq r-1$. Define $L$ to be the ideal generated by $\bigcup_{0\leq i\leq r}K_i$. Since $\langle x_2^{a_2},\dots,x_t^{a_t}\rangle_{j}\subseteq K_0$, for $d\leq j\leq d+1$, it follows that $\langle x_1^{a_1},\dots,x_t^{a_t}\rangle_j\subseteq L$ for $d\leq j\leq d+1$.\\\\

\textbf{\emph{Claim:}} If $w$ is a monomial in $L$ of degree $d$ or $d+1$, then $w\in \bigcup_{0\leq i\leq r}K_i$.\\\\
\emph{Proof of the claim:} There exists a monomial $u$ in $\bigcup_{0\leq i\leq r}K_i$ such that $u|w$; i.e., $w=vu$ for some monomial $v\in S$. If $u\in K_r$, then $w\in K_r$. Assume that $u=zx_1^{i}\in K_i$, where $z\in L_i$ for some $0\leq i\leq r-1$ of degree $d-i$ or $d-i+1$. If $\deg(z)=d-i+1$, then $v=1$ and so $w\in K_i$. Similarly, if $\deg(z)=d-i$ and $\deg(w)=d$, we obtain that $w\in K_i$. Assume that $\deg(z)=d-i$ and $\deg(w)=d+1$. So $v=x_j$ for some $1\leq j\leq n$. If $j\neq 1$, then $zv\in L_{i,d-i+1}$, since $L_i$ is an ideal in $\overline{S}$. So $w\in K_i$. Assume that $j=1$. If $i+1\geq r$, then $w\in K_r$. Assume that $i+1<r$. Since $L_{i,d-i}\subseteq L_{i+1,d-i}$, it follows that $z\in L_{i+1,d-i}$. So $w=zx_1^{i+1}\in K_{i+1}$. Hence, we proved the claim.\\\\
Assume that $j=d$ or $j=d+1$. We conclude that $|L_j|=\sum_{i=0}^{r-1}|L_{i,j-i}|+\sum_{i=0}^{j-r}|\overline{S}_i|$. Since $|S_j|=\sum_{0\leq i\leq j}|\overline{S}_i|$, it follows that $$|S_j|-|L_j|=\sum_{i=j-(r-1)}^{j}|\overline{S}_{i}|-\sum_{i=0}^{r-1}|L_{i,j-i}|=\sum_{i=0}^{r-1}|\overline{S}_{j-i}|-\sum_{i=0}^{r-1}|L_{i,j-i}|.$$ So $H(S/L,j)=\sum_{i=0}^{r-1}H(\overline{S}/L_i,j-i)=\sum_{i=0}^{r-1}H(S/J_i,j-i)=H(S/I,j).$
It follows that \begin{center} $H(S/L,d)=H(S/I,d)$ and $H(S/L,d+1)=H(S/I,d+1)$.\end{center}
\end{proof}

\begin{cor}\label{22}
If $M$ is a monomial ideal containing a regular sequence $f_1,\dots,f_t$ with $\deg(f_i)=a_i$, then $M$ has the same Hilbert function as a monomial ideal in $S$ containing $x_1^{a_1},\dots,x_t^{a_t}$.
\end{cor}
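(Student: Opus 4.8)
The plan is to recognize the corollary as an immediate specialization of Theorem \ref{21}, the only point requiring any attention being that the resulting ideal can be taken monomial rather than merely graded. First I would observe that every monomial is a product of linear forms: the monomial $x_{i_1}^{b_1}\cdots x_{i_k}^{b_k}$ is precisely the product of the linear forms $x_{i_1},\dots,x_{i_k}$ taken with the multiplicities $b_1,\dots,b_k$. Consequently a monomial ideal, having a system of monomial generators, is generated by products of linear forms, and hence is totally reducible in the sense of Definition \ref{1}.

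Next I would apply Theorem \ref{21} with the choice $I=M$. The hypotheses are met: $M$ is totally reducible and contains the regular sequence $f_1,\dots,f_t$ of degrees $a_1,\dots,a_t$, while the containment $M\subseteq I=M$ is trivial. The theorem then furnishes an ideal with the same Hilbert function as $M$ that contains $x_1^{a_1},\dots,x_t^{a_t}$, which already yields the EGH statement for $M$.

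Finally, to upgrade ``ideal'' to ``monomial ideal'' as the statement demands, I would point to the construction carried out inside the proof of Theorem \ref{21}: the ideal is assembled through Lemma \ref{23}, which via the Clements--Lindstr\"om theorem replaces the relevant graded pieces by lex vector spaces in $S/\langle x_1^{a_1},\dots,x_t^{a_t}\rangle$, so the output is a lex-plus-powers ideal and therefore monomial by definition. I would also remark that the field extension used in Theorem \ref{21} to make $K$ infinite is harmless in this setting, since the Hilbert function of a monomial ideal is independent of the field and the lex-plus-powers ideal produced is already defined over the original $K$. I do not anticipate any genuine obstacle here: the content lies entirely in Theorem \ref{21}, and the corollary follows once one notices that monomial ideals form a subclass of the totally reducible ideals.
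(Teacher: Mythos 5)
Your proposal is correct and takes exactly the route the paper intends: the paper states Corollary \ref{22} without proof precisely because, as you observe, every monomial is a product of variables, so a monomial ideal is totally reducible and Theorem \ref{21} applies with $I=M$. Your remaining observations are also on target --- the ideal produced through Lemma \ref{23} has lex graded pieces modulo $\langle x_1^{a_1},\dots,x_t^{a_t}\rangle$, hence is lex-plus-powers and in particular monomial, and the passage to an infinite field is harmless since Hilbert functions are unchanged under field extension.
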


\begin{rem}
It is easy to prove the case $t=n$ in corollary \ref{22} without using theorem \ref{21}:\\
If $f_1,\dots,f_n$ is a regular sequence in $M$ and $x_j^2\notin \supp(f_i)$ for some $j$ and all $1\leq i\leq n$, then $P=(f_1,\dots,f_n)\subseteq (x_1,\dots,x_{j-1},x_{j+1},\dots,x_n)$. It follows that $\mathrm{ht}(P)\leq n-1$, a contradiction. So for every $1\leq j\leq n$ there is $1\leq i\leq n$ such that $x_j^2\in \supp(f_i)$. Since $\supp(f_i)\subset M$ for all $i$, it follows that $x_1^2,\dots,x_n^2\in M$.
\end{rem}

\begin{cor}\label{25}
If $I$ is a monomial ideal in $S=K[x_1,\dots,x_n]$ of height $t$, then $I$ has the same Hilbert function as an ideal containing $x_1^{a_1},\dots,x_t^{a_t}$, for some $a_1,\dots,a_t$ satisfying $$\min\{j~|~\beta_{1,j}^{S}(S/I)\neq0\}\leq a_1\leq a_2\cdots\leq a_t\leq \max\{j~|~\beta_{1,j}^{S}(S/I)\neq0\}.$$
\end{cor}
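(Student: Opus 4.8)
The goal is to prove Corollary~\ref{25}: a monomial ideal $I$ of height $t$ has the same Hilbert function as an ideal containing $x_1^{a_1},\dots,x_t^{a_t}$, where the $a_i$ are sandwiched between the minimal and maximal degrees appearing in the first syzygies (equivalently, in the minimal generators) of $I$.

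The plan is to deduce this from Corollary~\ref{22} (equivalently Theorem~\ref{21} with $M=I$), whose conclusion already gives an ideal containing $x_1^{a_1},\dots,x_t^{a_t}$ with the same Hilbert function once we have located a suitable regular sequence $f_1,\dots,f_t$ inside $I$ of degrees $a_1,\dots,a_t$. So the real content of the corollary is the degree bound: I must produce a homogeneous regular sequence in $I$ whose degrees all lie in the interval $[m,M]$, where $m=\min\{j\mid\beta_{1,j}^S(S/I)\neq0\}$ and $M=\max\{j\mid\beta_{1,j}^S(S/I)\neq0\}$. Since $I$ is a monomial ideal, $\beta_{1,j}^S(S/I)\neq0$ exactly when $I$ has a minimal generator of degree $j$; thus $m$ is the least degree of a minimal generator and $M$ is the greatest.

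First I would recall that $\mathrm{ht}(I)=t$ means $\dim S/I = n-t$, and that because $I$ is monomial we may pass to a regular sequence of \emph{monomials}: indeed the minimal primes of a monomial ideal are generated by subsets of the variables, and a standard prime-avoidance argument (over an infinite field, or directly using that the monomials of a fixed degree are not contained in any finite union of such coordinate primes) lets us build monomials $g_1,\dots,g_t$ in $I$ forming a regular sequence. The key step is to control their degrees. Every minimal generator of $I$ has degree between $m$ and $M$, so each such generator is a monomial $u$ with $m\le\deg(u)\le M$. I would use prime avoidance one variable at a time, as in Lemma~\ref{20}: having chosen $g_1,\dots,g_j$ spanning a height-$j$ complete intersection with minimal primes $P_1,\dots,P_r$, I need a homogeneous element of $I$ avoiding all the $P_i$ and of degree in $[m,M]$. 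Because $\mathrm{ht}(I)=t>j$, we have $I\nsubseteq P_i$ for each $i$, so some minimal generator $u$ of $I$ lies outside $P_i$; such a $u$ has degree at most $M$. The main obstacle is that no single generator need avoid \emph{all} the $P_i$ simultaneously while keeping degree bounded; to fix this I would take, for each $i$, a minimal generator $u_i\notin P_i$ and form a generic $K$-linear combination after multiplying by monomials to equalize degrees — but to stay within the degree window I instead argue at the bottom degree: since $I_{\le M}$ generates $I$ and is not contained in any $P_i$, the graded piece $I_j$ for the appropriate $j\in[m,M]$ escapes $P_1\cup\cdots\cup P_r$ by prime avoidance over an infinite field, yielding $g_{j+1}\in I_j$ that is a nonzerodivisor modulo $(g_1,\dots,g_j)$.

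Thus the step I expect to be the genuine obstacle is simultaneously guaranteeing the regular-sequence (prime-avoidance) condition and the two-sided degree constraint $m\le a_i\le M$; the lower bound $a_i\ge m$ is automatic since $I$ contains no nonzero form below degree $m$, while the upper bound $a_i\le M$ requires that every minimal prime $P_i$ be avoided already by some generator of degree $\le M$, which holds because the minimal generators of degrees in $[m,M]$ generate $I$ and hence cannot all lie in $P_i$. With $g_1,\dots,g_t$ in hand and $M=I$ itself a totally reducible (indeed monomial) ideal containing this regular sequence, I would invoke Theorem~\ref{21} (or directly Corollary~\ref{22}) to obtain an ideal containing $x_1^{a_1},\dots,x_t^{a_t}$ with the same Hilbert function as $I$, completing the proof. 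As a final remark I would note that one can reorder so that $a_1\le a_2\le\cdots\le a_t$, matching the conventions fixed in the preliminaries.
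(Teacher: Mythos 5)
Your overall route is the one the paper intends: the paper prints no separate proof of this corollary, deriving it from Theorem \ref{21} applied with $M=I$ (every monomial ideal is totally reducible, being generated by products of variables), once one produces inside $I$ a homogeneous regular sequence of length $t=\mathrm{ht}(I)$ whose degrees lie in the window $[m,M]$ determined by the minimal generators; that is exactly your plan, including the use of graded prime avoidance as in Lemma \ref{20} and unmixedness of the complete intersection $\langle g_1,\dots,g_j\rangle$ to upgrade avoidance of minimal primes to the nonzerodivisor property. However, one assertion in your write-up is genuinely false: monomials form a regular sequence if and only if they are pairwise coprime, and a monomial ideal of height $t$ need not contain $t$ pairwise coprime monomials. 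For $I=\langle xy,xz,yz\rangle\subseteq K[x,y,z]$, which has height $2$, any monomial coprime to $xy$ is a power of $z$, and no power of $z$ lies in $I$; so there is no monomial regular sequence of length $2$ in $I$. This is precisely why Mermin's result on monomial regular sequences, cited in the introduction, does not settle the monomial case and why Theorem \ref{21} is needed. Since your actual construction uses generic elements of graded pieces rather than monomials, the misstatement is inessential, but it must be deleted.

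The second defect is the step asserting that ``the graded piece $I_j$ for the appropriate $j\in[m,M]$ escapes $P_1\cup\cdots\cup P_r$'': as written this is a non sequitur, and ``bottom degree'' is the wrong end of the window. For each $i$ you only know that some degree $d_i\leq M$ satisfies $I_{d_i}\nsubseteq P_i$, and these degrees may differ with $i$; moreover the bottom degree can fail outright, e.g.\ for $I=\langle x^2,y^3\rangle$ with $g_1=x^2$ the piece $I_2=\Span(x^2)$ lies inside the minimal prime $\langle x\rangle$, and only $j=3$ works. The one-line repair: each $P_i$ is minimal over a regular sequence of length $j<t\leq n$, hence has height $j<n$, so $S_1\nsubseteq P_i$ and there is a linear form $\ell\notin P_i$; multiplying a generator $u_i\notin P_i$ by powers of $\ell$ (not by an arbitrary monomial, which could land in $P_i$) shows $I_e\nsubseteq P_i$ for every $e\geq d_i$, in particular for the single uniform degree $e=M$. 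Graded prime avoidance over an infinite field---a harmless assumption, since the Hilbert function of a monomial ideal and of the lex-plus-powers output are field independent, as the proof of Theorem \ref{21} itself notes---then yields $g_{j+1}\in I_M$ avoiding all $P_i$. Taking every degree equal to $M$ (or choosing each degree minimal and reordering) gives a type satisfying $m\leq a_1\leq\cdots\leq a_t\leq M$, the lower bound being automatic since $I_j=0$ for $j<m$; Theorem \ref{21} then finishes the proof as you say.
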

The following theorem shows that the CM property is preserved.

\begin{thm}\label{24}
Assume that $M$ is a totally reducible ideal that contains a regular sequence $f_1,\dots,f_t$ of degrees $\deg(f_i)=a_i$. If $I$ is a CM graded ideal of height $t$ in $S$ containing $M$, then $I$ has the same Hilbert function as a CM ideal containing $x_1^{a_1},\dots,x_t^{a_t}$.
\end{thm}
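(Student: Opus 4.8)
The plan is to reduce the Cohen--Macaulay case to the Artinian situation already settled by Theorem \ref{21}, and then to lift the resulting ideal back up by a polynomial extension, which automatically restores both the Cohen--Macaulay property and the Hilbert function. First I would pass to an infinite field: the ideal produced below is lex-plus-powers (this is how Theorem \ref{21} builds it via Lemma \ref{23}), hence monomial, so its Hilbert function and its Cohen--Macaulay property are unchanged under the extension $K\subseteq\overline{K}$, and we may assume $K$ is infinite. Since $I$ is Cohen--Macaulay of height $t$, the ring $S/I$ has Krull dimension $n-t$; the complete intersection $S/\langle f_1,\dots,f_t\rangle$ is also Cohen--Macaulay of dimension $n-t$. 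Over an infinite field a general choice of $n-t$ linear forms is a linear system of parameters, and for a Cohen--Macaulay ring such a system is automatically a regular sequence. As the two conditions ``$\ell_1,\dots,\ell_{n-t}$ is a regular sequence on $S/I$'' and ``$\ell_1,\dots,\ell_{n-t}$ is a regular sequence on $S/\langle f_1,\dots,f_t\rangle$'' each define a nonempty Zariski-open set, I can select $\ell_1,\dots,\ell_{n-t}$ satisfying both at once, and after a linear change of coordinates assume $\ell_i=x_{t+i}$. Write $\overline{S}=K[x_1,\dots,x_t]=S/\langle x_{t+1},\dots,x_n\rangle$ and let $\overline{I},\overline{M},\overline{f_i}$ denote the images in $\overline{S}$.

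Next I would verify that Theorem \ref{21} applies to $\overline{I}$ inside $\overline{S}$. Because $x_{t+1},\dots,x_n$ is a regular sequence on the complete intersection $S/\langle f_1,\dots,f_t\rangle$, the images $\overline{f_1},\dots,\overline{f_t}$ form a regular sequence in $\overline{S}$ with $\deg\overline{f_i}=a_i$; since $M$ is totally reducible so is $\overline{M}$, and $\langle\overline{f_1},\dots,\overline{f_t}\rangle\subseteq\overline{M}\subseteq\overline{I}$. Thus Theorem \ref{21}, applied with the ambient ring $\overline{S}$ having exactly $t$ variables, produces an ideal $\overline{J}\subseteq\overline{S}$ containing $x_1^{a_1},\dots,x_t^{a_t}$ with $H(\overline{S}/\overline{J})=H(\overline{S}/\overline{I})$. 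Because $x_{t+1},\dots,x_n$ is simultaneously a regular sequence on the Cohen--Macaulay ring $S/I$, the quotient $\overline{S}/\overline{I}$ is Artinian and its Hilbert series is the $h$-polynomial $h(s)$ of $S/I$; in particular $H(\overline{S}/\overline{J})=h(s)$ and $\overline{J}$ is Artinian.

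Finally I would lift. Set $J:=\overline{J}\,S$, the extension of $\overline{J}$ to $S$. Since $\overline{J}$ is generated by elements of $K[x_1,\dots,x_t]$, there is a graded isomorphism $S/J\cong(\overline{S}/\overline{J})[x_{t+1},\dots,x_n]$, a polynomial ring in $n-t$ variables over the Artinian ring $\overline{S}/\overline{J}$. Hence $S/J$ is Cohen--Macaulay of dimension $n-t$, one has $x_1^{a_1},\dots,x_t^{a_t}\in J$, and
\[
H_{S/J}(s)=\frac{H_{\overline{S}/\overline{J}}(s)}{(1-s)^{n-t}}=\frac{h(s)}{(1-s)^{n-t}}=H_{S/I}(s),
\]
so $J$ is the required Cohen--Macaulay ideal. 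I expect the main obstacle to lie in the simultaneous genericity of the first step: guaranteeing a single linear system of parameters that is a regular sequence on $S/I$ and on the complete intersection $S/\langle f_1,\dots,f_t\rangle$ at the same time, so that the reduction preserves the $h$-vector of $I$ while keeping a regular sequence of the prescribed degrees $a_1,\dots,a_t$ inside $\overline{M}$ --- together with the routine but essential check that the flat polynomial extension $J=\overline{J}\,S$ transports both Cohen--Macaulayness and the full Hilbert function back to $S$.
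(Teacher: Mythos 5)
Your proof is correct and follows essentially the same route as the paper's: Artinian reduction modulo a linear system of parameters $x_{t+1},\dots,x_n$ (valid after extending to an infinite field), an application of Theorem \ref{21} in $\overline{S}=K[x_1,\dots,x_t]$, and lifting via the flat extension $J=\overline{J}S$, with the Hilbert-series bookkeeping done through the $h$-polynomial exactly as in the paper. If anything you are more careful than the paper itself, which invokes Theorem \ref{21} for $\overline{I}$ without explicitly noting that the linear forms must simultaneously be chosen regular on $S/\langle f_1,\dots,f_t\rangle$ so that $\overline{M}$ still contains a regular sequence of type $(a_1,\dots,a_t)$; your simultaneous-genericity argument supplies precisely this implicit step.
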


\begin{proof}
We may assume that $K$ is infinite. By a linear change of variables if necessary and [\citen{cmr}, Proposition 1.5.12], we may assume that $x_{t+1},\dots,x_{n}$ is a regular sequence in $S/I$. By [\citen{cmr}, Corollary 4.1.8] there exists a unique $Q_{S/I}(r)\in \mathbb{Z}[r]$ such that $H_{S/I}(r)=\frac{Q_{S/I}(r)}{(1-r)^{n-t}}$. If $\overline{I}$ is the image of $I$ in $\overline{S}=S/(x_{t+1},\dots,x_n)$, then $H_{\overline{S}/\overline{I}}(r)=Q_{S/I}(r)$. By theorem \ref{21}, there is an ideal $\overline{L}$ in $\overline{S}$ containing $x_1^{a_1},\dots,x_t^{a_t}$ such that $H_{\overline{S}/\overline{I}}(r)=H_{\overline{S}/\overline{L}}(r)$. The $K$-algebra $\overline{S}/\overline{L}$ is of dimension zero so it is CM. Let $L=\overline{L}S$. Since $Q_{S/L}(r)=H_{\overline{S}/\overline{L}}(r)$, it follows that $Q_{S/L}(r)=Q_{S/I}(r)$. Also we have $\K.dim(S/L)=\K.dim(S/I)$, so $H_{S/L}(r)=H_{S/I}(r)$.
\end{proof}

\begin{thm}\label{25}
Let $\Delta$ be a CM simplicial complex such that $I_{\Delta}$ is of height $t$ and containing a regular sequence $f_1,\dots,f_t$ of type $(a_1,\dots,a_t)$. Then there exists a CM $(a_1-1,\dots,a_t-1)$-balanced simplicial complex $\Gamma$ with the same $h$-vector as $\Delta$.
\end{thm}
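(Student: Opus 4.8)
The plan is to transfer the problem to a zero-dimensional monomial algebra, realize that algebra as the Stanley--Reisner ring of a balanced complex via polarization, and read off the two required properties (Cohen--Macaulayness and the $h$-vector). Throughout I use that $I_{\Delta}$, being a squarefree monomial ideal, is totally reducible, so all of Section~3 applies to it.

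First I would extract an Artinian monomial model for the $h$-vector. The ideal $I_{\Delta}$ is a CM ideal of height $t$ containing the regular sequence $f_1,\dots,f_t$, so Theorem~\ref{24} applied with $M=I=I_{\Delta}$ yields a CM ideal $L=\overline{L}\,S$ with $H_{S/L}=H_{S/I_{\Delta}}$, where $\overline{L}\subseteq\overline{S}=K[x_1,\dots,x_t]$ is a monomial (lex-plus-powers) ideal containing $x_1^{a_1},\dots,x_t^{a_t}$ and $A:=\overline{S}/\overline{L}$ is Artinian. Tracing the construction in that proof, $x_{t+1},\dots,x_n$ is a regular sequence on $S/I_{\Delta}$ and $H_{\overline{S}/\overline{L}}=Q_{S/I_{\Delta}}$, which is exactly the $h$-polynomial of $\Delta$; hence $H(A)$ equals the $h$-vector of $\Delta$. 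Thus it suffices to realize the sequence $H(A)$ as the $h$-vector of a CM $(a_1-1,\dots,a_t-1)$-balanced complex.

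Next I would polarize $\overline{L}$. For each $k$ introduce a color class $V_k=\{x_{k,1},\dots,x_{k,a_k}\}$, set $R=K[\,x_{k,j}:1\le k\le t,\ 1\le j\le a_k\,]$, and replace the pure power $x_k^{a_k}$ by the squarefree monomial $x_{k,1}\cdots x_{k,a_k}$, obtaining the squarefree monomial ideal $\overline{L}^{\,\mathrm{pol}}\subseteq R$. Let $\Gamma$ be the complex with $I_{\Gamma}=\overline{L}^{\,\mathrm{pol}}$. Polarization preserves the graded Betti numbers, hence the Cohen--Macaulay property, so $R/\overline{L}^{\,\mathrm{pol}}$ is CM; moreover the $\sum_k(a_k-1)$ variable differences $x_{k,j}-x_{k,1}$ (for $j\ge 2$) form a linear regular sequence whose quotient recovers $A$. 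Consequently $\Gamma$ is CM of Krull dimension $\sum_k(a_k-1)$, and its $h$-vector, being the Hilbert function of this Artinian reduction, equals $H(A)$ and hence the $h$-vector of $\Delta$.

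Finally I would verify balancedness against the partition $V_1,\dots,V_t$. Since $I_{\Delta}$ has no linear generators, each $f_k$ has degree $a_k\ge 2$, so $\overline{L}^{\,\mathrm{pol}}$ is generated in degrees $\ge 2$ and every $x_{k,j}$ is a vertex of $\Gamma$; and since $x_{k,1}\cdots x_{k,a_k}\in I_{\Gamma}$, the whole class $V_k$ is a nonface, forcing $|F\cap V_k|\le a_k-1$ for every face $F$. As $1+\dim\Gamma$ equals the Krull dimension $\sum_k(a_k-1)=\sum_k b_k$ with $b_k=a_k-1$, the complex $\Gamma$ is CM and $(a_1-1,\dots,a_t-1)$-balanced with $h(\Gamma)=h(\Delta)$. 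The step I expect to be the main obstacle is the polarization argument: one must be certain both that polarization preserves Cohen--Macaulayness and that it computes the $h$-vector correctly through the linear regular sequence of variable differences, and that the monomial Artinian quotient produced by Theorem~\ref{24} genuinely contains the pure powers $x_1^{a_1},\dots,x_t^{a_t}$ rather than merely matching a Hilbert function.
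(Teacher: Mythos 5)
Your proposal is correct and follows essentially the same route as the paper: pass to the Artinian reduction $\overline{L}\subseteq K[x_1,\dots,x_t]$ supplied by the proof of Theorem \ref{24}, polarize, take $\Gamma$ to be the complex of the polarized ideal, and verify balancedness using the color classes $V_k$, whose full products lie in $I_{\Gamma}$ because $\overline{L}$ contains the pure powers $x_1^{a_1},\dots,x_t^{a_t}$. The only difference is cosmetic: where the paper cites [\citen{monomial}, Corollary 1.6.3] for the preservation of Cohen--Macaulayness and the $h$-vector under polarization, you re-derive that fact via graded Betti numbers and the linear regular sequence of variable differences.
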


\begin{proof}
By the proof of theorem \ref{24}, there is a CM ideal  $L$ in $R=K[x_1,\dots,x_t]$ containing $x_1^{a_1},\dots,x_t^{a_t}$ such that $H(R/L)=h(\Delta)$. Denote by $$L^{pol}\subseteq K[x_1,y_{1,1},\dots,y_{1,a_1-1},\dots,x_t,y_{t,1},\dots,y_{t,a_t-1}]$$ the polarization of $L$ and let $\Gamma$ be the simplicial complex corresponding to $L^{pol}$. By [\citen{monomial}, Corollary 1.6.3], $\Gamma$ is a CM simplicial complex and $h(\Gamma)=h(\Delta)$. Let $V_i=\{x_i,y_{i,1},\dots,y_{i,a_{i-1}}\}$ for all $1\leq i\leq t$. If $F$ is a face in $\Gamma$, then $|F\cap V_i|\leq a_i-1$, because otherwise $x_F\in I_{\Gamma}$, a contradiction. Moreover the dimension of $\Gamma$ is the dimension of the face $\bigcup_{1\leq i\leq t}V_i\setminus \{x_i\}$, so $1+\dim \Gamma=\sum_{i=1}^{t}a_i-1$. This implies that $\Gamma$ is a Cohen Macaulay $(a_1-1,\dots,a_t-1)$-balanced simplicial complex.
\end{proof}

\begin{cor}\emph{[\citen{CCV}, Corollary 2.3]}\\
The $h$-vector of CM flag simplicial complex is the $h$-vector of CM balanced simplicial complex.
\end{cor}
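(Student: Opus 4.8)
The plan is to reduce everything to an Artinian situation and then recover a simplicial complex by polarization. First I would invoke the proof of Theorem \ref{24}: since $I_{\Delta}$ is a monomial ideal it is in particular totally reducible, so taking $M=I=I_{\Delta}$ yields, after a generic linear change of coordinates making $x_{t+1},\dots,x_n$ a regular sequence on $K[\Delta]$, a CM (indeed Artinian) monomial ideal $L$ in $R=K[x_1,\dots,x_t]$ that contains $x_1^{a_1},\dots,x_t^{a_t}$ and satisfies $H(R/L)=h(\Delta)$. The point is that the numerator $Q_{S/I_{\Delta}}(r)$ of the Hilbert series is exactly the $h$-vector and equals the Hilbert function of the zero-dimensional reduction; Theorem \ref{24} preserves this Hilbert function while producing the required pure powers. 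Crucially $L$ is a lex-plus-powers ideal, hence a monomial ideal, which is what makes the next step available.

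Next I would polarize. Because $x_i^{a_i}\in L$, its polarization is the squarefree monomial $x_i y_{i,1}\cdots y_{i,a_i-1}$, so $L^{pol}$ naturally lives in the ring on the variables $x_i,y_{i,1},\dots,y_{i,a_i-1}$, $1\le i\le t$, and I let $\Gamma$ be the Stanley-Reisner complex of $L^{pol}$. Here I would lean on the standard fact \cite[Corollary 1.6.3]{monomial} that polarization preserves both Cohen-Macaulayness and the $h$-vector: $R^{pol}/L^{pol}$ is obtained from $R/L$ by adjoining the new variables and cutting by the regular sequence of the linear differences, so $\Gamma$ is CM and $h(\Gamma)=H(R/L)=h(\Delta)$.

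It remains to verify that $\Gamma$ is $(a_1-1,\dots,a_t-1)$-balanced. Set $V_i=\{x_i,y_{i,1},\dots,y_{i,a_i-1}\}$, which partitions the vertex set into $t$ blocks. Since $x_i y_{i,1}\cdots y_{i,a_i-1}$ is a minimal generator of $L^{pol}$, the whole block $V_i$ is a nonface, so every face $F$ satisfies $|F\cap V_i|\le a_i-1$. For the dimension condition I would exhibit the face $\bigcup_i(V_i\setminus\{x_i\})$, which uses $a_i-1$ vertices from each block; equivalently $1+\dim\Gamma=\K.dim(K[\Gamma])=\K.dim(R^{pol}/L^{pol})=\sum_i(a_i-1)$, since polarization raised the dimension of the Artinian $R/L$ by exactly the number $\sum_i(a_i-1)$ of added variables. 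This secures both requirements in the definition of a balanced complex.

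The genuinely nontrivial inputs are the two preservation statements: that Theorem \ref{24} delivers an Artinian ideal whose Hilbert function is precisely $h(\Delta)$, and that polarization preserves the CM property together with the $h$-vector. The combinatorial check---that the blocks $V_i$ realize the $(a_1-1,\dots,a_t-1)$-balanced structure---is then essentially forced by the shape of the polarized pure powers $x_i y_{i,1}\cdots y_{i,a_i-1}$, with the only subtlety lying in the bookkeeping of the dimension count.
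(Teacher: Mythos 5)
Your argument is, in substance, a re-derivation of the paper's Theorem \ref{25}: the Artinian reduction through the proof of Theorem \ref{24}, the polarization of the monomial ideal $L$, and the block partition $V_i=\{x_i,y_{i,1},\dots,y_{i,a_i-1}\}$ all match the paper's machinery. But the paper's proof of this corollary consists of exactly one additional observation, and it is the step your write-up omits: you never verify the hypothesis of Theorem \ref{24}, namely that $I_{\Delta}$ actually contains a regular sequence $f_1,\dots,f_t$ with $t=\mathrm{ht}(I_{\Delta})$, and, more to the point, you never use flagness to pin the degrees down to $a_i=2$ for all $i$. Taking $M=I_{\Delta}$ as the totally reducible ideal is fine, but a monomial ideal does not come with a regular sequence of prescribed length and degrees for free; what supplies it here is that a flag ideal is generated in degree $2$ together with prime avoidance in the style of Lemma \ref{20}: if $(I_{\Delta})_2\subseteq P$ for a minimal prime $P$ of $(f_1,\dots,f_j)$ with $j<t$, then $I_{\Delta}\subseteq P$ since all generators have degree $2$, contradicting $\mathrm{ht}(I_{\Delta})=t>\mathrm{ht}(P)$; over an infinite field one then picks $f_{j+1}\in (I_{\Delta})_2$ avoiding the finitely many such $P$.

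Without fixing $a_i=2$, your conclusion fails as stated: your construction produces an $(a_1-1,\dots,a_t-1)$-balanced complex for unspecified $a_i$, and your closing claim that this ``secures both requirements in the definition of a balanced complex'' is a non sequitur --- balanced means all block bounds equal $1$, i.e.\ $|F\cap V_i|\le 1$, which holds precisely when every $a_i=2$, so that each block $V_i=\{x_i,y_{i,1}\}$ contributes at most one vertex to any face. The remainder of your argument (that $L$ is a monomial, indeed lex-plus-powers, ideal so polarization applies; that polarization preserves Cohen--Macaulayness and the $h$-vector via the regular sequence of linear differences; and the dimension count $1+\dim\Gamma=\sum_{i=1}^{t}(a_i-1)$) is sound and agrees with the paper's proof of Theorem \ref{25}. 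The repair is short: insert the prime-avoidance selection of a length-$t$ regular sequence of quadrics inside $I_{\Delta}$, which is exactly the two-line argument the paper gives before citing Theorem \ref{25}.
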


\begin{proof}
If $\Delta$ is a CM flag simplicial complex then $I_{\Delta}$ contains a regular sequence $f_1,\dots,f_t$ with $\deg(f_i)=2$ for all $1\leq i\leq t$ and $t$ is the height of $I_{\Delta}$. By theorem \ref{25}, there is a CM $(1,\dots,1)$-balanced (i.e. balanced) simplicial complex with the same $h$-vector.
\end{proof}

\end{document}